\title[Cohomologically symplectic solvmanifolds]
{ Cohomologically symplectic solvmanifolds are symplectic }
\author{Hisashi Kasuya}
\address[H.kasuya]{Graduate school of mathematical science university of tokyo japan }
\curraddr{}
\email{khsc@ms.u-tokyo.ac.jp}
\thanks{}
\keywords{ cohomologically symplectic, solvmanifold, polycyclic group}
\newcommand{\C}{\mathbb{C}}
\newcommand{\R}{\mathbb{R}} 
\newcommand{\Ra}{\mathbb{Q}}
\newcommand{\Z}{\mathbb{Z}}
\newcommand{\g}{\frak{g}}
\theoremstyle{plain}
\newtheorem{theorem}{Theorem}[section] 
\theoremstyle{remark}
\newtheorem{remark}{Remark}
\theoremstyle{lemma}
\theoremstyle{definition}
\newtheorem{definition}[theorem]{Definition}
\theoremstyle{proposition}
\newtheorem{proposition}[theorem]{Proposition}
\theoremstyle{corollary}
\begin{document} 
\begin{abstract} 
We consider  aspherical manifolds with torsion-free virtually polycyclic fundamental groups, constructed by Baues.
We prove that if those manifolds are cohomologically symplectic then they are symplectic.
As a corollary we show that cohomologically symplectic solvmanifolds are symplectic.

\end{abstract}
\maketitle

\section{\bf Introduction}

A $2n$-dimensional compact manifold $M$ is called cohomologically symplectic (c-symplectic)  if we have $\omega\in H^{2}(M,\R)$ such that $\omega^{n}\not=0$.
A compact symplectic manifold is c-symplectic but the converse is not true in general.
For example $\C P^{2}\#\C P^{2}$ is c-symplectic but not symplectic. 
But for some class of manifolds these two conditions are equivalent.
For examples, nilmanifolds i.e. compact homogeneous spaces of  nilpotent simply connected  Lie group. In \cite{Nom}, for a nilpotent  simply connected Lie group $G$ with a cocompact discrete subgroup $\Gamma$ (such subgroup is called a lattice), Nomizu showed that the De Rham cohomology $H^{\ast}(G/\Gamma, \R)$ of $G/\Gamma$ is isomorphic to the cohomology $H^{\ast}(\g)$ of the Lie algebra of $G$.
By the application of Nomizu's theorem, if $G/\Gamma$ is c-symplectic then $G/\Gamma$ is symplectic (see \cite[p.191]{Fe}).
Every nilmanifold can be represented by such $G/\Gamma$ (see \cite{Mal}).

Consider Solvmanifolds i.e. compact homogeneous spaces of  solvable simply connected  Lie groups.
Let $G$ be a solvable simply connected Lie group with a lattice $\Gamma$. 
We assume that for any $g\in G$ the all eigenvalues of the adjoint operator ${\rm Ad}_{g}$ are real.
With this assumption, in \cite{Hatt} Hattori extended Nomizu's theorem.
By Hattori's theorem, for such case, without difficulty, we can similarly show that if $G/\Gamma$ is c-symplectic, then $G/\Gamma$ is symplectic.
But  the isomorphism $H^{\ast}(G/\Gamma, \R)\cong H^{\ast}(\g)$ fails to hold  for general solvable Lie groups, and not all solvmanifolds can be represented by $G/\Gamma$.
Thus it is a considerable problem whether every c-symplectic solvmanifold is symplectic.

Let $\Gamma$ be a torsion-free virtually polycyclic group.
In \cite{B} Baues constructed the compact aspherical manifold $M_{\Gamma}$ with $\pi_{1}(M_{\Gamma})=\Gamma$.
Baues proved that every infra-solvmanifold (see \cite{B} for the definition) is diffeomorphic to $M_{\Gamma}$. In particular the class of such aspherical manifolds contains the class of solvmanifolds.  
We prove that if $M_{\Gamma}$ is  c-symplectic then $M_{\Gamma} $ is symplectic.
In other words, for a torsion-free virtually polycyclic group $\Gamma$ with $2n={\rm rank}\, \Gamma$,  if there exists $\omega\in H^{2}(\Gamma,\R)$ such that $\omega^{n}\not=0  $ then we have a  symplectic aspherical manifold with the fundamental group $\Gamma$.

\section{\bf Notation and conventions}

Let $k$ be a subfield of $\C$.
A group $\bf G$ is called a $k$-algebraic group if $\bf G$ is a Zariski-closed subgroup of $GL_{n}(\C)$ which is defined by polynomials with coefficients in $k$.
Let  ${\bf G}(k)$ denote the set  of  $k$-points of $\bf G$ 
and ${\bf U}({\bf G})$ the maximal Zariski-closed unipotent normal $k$-subgroup of $\bf G$ called the unipotent radical of $\bf G$.
Let $U_{n}(k)$ denote the $n\times n$ $k$-valued upper triangular unipotent matrix group.

\section{\bf Aspherical  manifolds with torsion-free virtually polycyclic fundamental groups}

\begin{definition}\label{a-d-1}
A group $\Gamma$ is polycyclic if it admits a sequence 
\[\Gamma=\Gamma_{0}\supset \Gamma_{1}\supset \cdot \cdot \cdot \supset \Gamma_{k}=\{ e \}\]
of subgroups such that each $\Gamma_{i}$ is normal in $\Gamma_{i-1}$ and $\Gamma_{i-1}/\Gamma_{i}$ is cyclic.
We denote ${\rm rank}\,\Gamma=\sum_{i=1}^{i=k}{\rm rank}\, \Gamma_{i-1}/\Gamma_{i}$.
\end{definition}

\begin{proposition}{\rm (\cite[Proposition 3.10]{R})}
 The fundamental group of a solvmanifold is  torsion-free polycyclic.
\end{proposition}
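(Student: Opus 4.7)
The plan is to write $M = G/\Gamma$ with $G$ a simply connected solvable Lie group and $\Gamma \subset G$ a cocompact lattice, and to verify the two claims \emph{torsion-free} and \emph{polycyclic} separately for $\Gamma$. First I would recall the classical fact that a simply connected solvable Lie group is diffeomorphic to a Euclidean space and therefore contractible; consequently the left-translation action of $\Gamma$ on $G$ is free and properly discontinuous, so $G \to G/\Gamma$ is the universal covering and $\pi_{1}(M) \cong \Gamma$. Torsion-freeness of $\Gamma$ follows from this contractibility: a non-trivial finite cyclic subgroup of $\Gamma$ would act freely on $G$, producing a finite-dimensional $K(\Z/m\Z,1)$, contradicting the infinite cohomological dimension of non-trivial finite groups.

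For polycyclicity, I would invoke Mostow's structure theory of solvmanifolds. Let $N \subset G$ be the nilradical. Since $G$ is solvable, $[G,G]\subset N$, hence $G/N$ is abelian and, being simply connected, is isomorphic to $\R^{k}$ for some $k$. Mostow's theorem guarantees that $\Gamma N$ is closed in $G$ and that $\Gamma\cap N$ is a lattice in $N$; consequently $\Gamma/(\Gamma\cap N)$ embeds as a discrete cocompact subgroup of $G/N\cong\R^{k}$ and so is isomorphic to $\Z^{k}$. By Malcev's theorem a lattice in a simply connected nilpotent Lie group is finitely generated and admits a filtration by normal subgroups with infinite cyclic quotients, so $\Gamma\cap N$ is polycyclic. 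Since the class of polycyclic groups is closed under extensions and $\Z^{k}$ is visibly polycyclic, the extension
\[
1\longrightarrow \Gamma\cap N\longrightarrow \Gamma\longrightarrow \Z^{k}\longrightarrow 1
\]
exhibits $\Gamma$ itself as polycyclic, completing the proof.

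The main obstacle is Mostow's structure theorem. The closedness of $\Gamma N$ and the cocompactness of $\Gamma\cap N$ in $N$ are not formal and rely on an averaging/closure argument for the $\Gamma$-orbits on $G/N$ that uses the solvability of $G$ crucially. Once this geometric input is in hand, everything else, namely the identification of $\pi_{1}$, the torsion-freeness, and the inductive polycyclic structure, is essentially automatic from standard Lie-theoretic facts (contractibility of simply connected solvable groups, Malcev's theorem for nilpotent lattices, closure of polycyclicity under extensions).
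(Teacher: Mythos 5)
The paper itself offers no argument for this proposition---it is quoted directly from Raghunathan \cite[Proposition 3.10]{R}---so your proposal has to be measured against the standard proof. The genuine gap is at your very first step: you write the solvmanifold as $M=G/\Gamma$ with $\Gamma$ a \emph{discrete} cocompact subgroup. A solvmanifold is a compact homogeneous space $G/H$ of a simply connected solvable Lie group, where the isotropy group $H$ is closed but need not be discrete; the paper stresses exactly this point in the introduction (``not all solvmanifolds can be represented by $G/\Gamma$'') and exhibits in its final section a solvmanifold $G/D$ with $D$ non-discrete, whose fundamental group is $D/D_{0}\cong \Z\ltimes_{\varphi}U_{3}(\Z)$. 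For such an $M=G/H$ the homotopy exact sequence of the fibration $H\to G\to G/H$, together with $\pi_{1}(G)=\pi_{0}(G)=1$, gives $\pi_{1}(M)\cong H/H_{0}$, where $H_{0}$ is the identity component of $H$. It is \emph{this} group that must be shown torsion-free polycyclic, and your lattice argument does not apply to it as written: $H/H_{0}$ is not a discrete subgroup of $G$, and the intersection $H\cap N$ with the nilradical is a closed, generally positive-dimensional subgroup rather than a lattice in $N$.

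Within the discrete case your reasoning is sound: contractibility of $G$ plus the cohomological-dimension argument gives torsion-freeness, and Mostow's theorem (closedness of $\Gamma N$ and $\Gamma\cap N$ a lattice in the nilradical $N$), Malcev's theorem, and closure of polycyclicity under extensions give polycyclicity. The missing idea for the general statement is the reduction from closed isotropy to its component group: one runs an analogous induction along the nilradical or derived series for the pair $(G,H)$, showing that the successive quotients of $H/H_{0}$ are finitely generated (free) abelian, and deduces torsion-freeness from the contractibility of $G/H_{0}$. This is precisely what \cite[Proposition 3.10]{R} supplies and what your proposal omits.
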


Let $k$ be a subfield of $\mathbb{C}$.
Let $\Gamma$ be a torsion-free virtually polycyclic group.
For a finite index polycyclic subgroup $\Delta\subset \Gamma$, we denote ${\rm rank}\, \Gamma ={\rm  rank}\, \Delta$.

\begin{definition}\label{a-d-2}
We call a $k$-algebraic group  ${\bf H}_{\Gamma}$ a $k$-algebraic hull of $\Gamma$ if there exists an injective group homomorphism $\psi:\Gamma\to {\bf H}_{\Gamma}(k)$
and ${\bf H}_{\Gamma}$ satisfies the following conditions:
\\
(1)  \ $\psi (\Gamma)$ is Zariski-dense in $\bf{H}_{\Gamma}$.\\
(2) \   $Z_{{\bf H}_{\Gamma}}({\bf U}({\bf H}_{\Gamma}))\subset {\bf U}({\bf H}_{\Gamma})$ where  $Z_{{\bf H}_{\Gamma}}({\bf U}({\bf H}_{\Gamma}))$ is the centralizer of ${\bf U}({\bf H}_{\Gamma})$. \\
(3) \ $\dim {\bf U}({\bf H}_{\Gamma})$=${\rm rank}\,\Gamma$.   
\end{definition}
\begin{theorem}\label{a-t-1}{\rm (\cite[Theorem A.1]{B})}
There exists a $k$-algebraic hull of $\Gamma$ and a k-algebraic hull of $\Gamma$ is unique up to $k$-algebraic group isomorphism.
\end{theorem}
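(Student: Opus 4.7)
The plan is to prove existence and uniqueness separately, using the structure theory of polycyclic groups.

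For existence, I would first reduce to the torsion-free polycyclic case: if $\Delta \subset \Gamma$ is a finite-index torsion-free polycyclic normal subgroup and ${\bf H}_\Delta$ has been constructed, then ${\bf H}_\Gamma$ is obtained as a $k$-algebraic extension of ${\bf H}_\Delta$ by the finite quotient $\Gamma/\Delta$, regarded as a zero-dimensional $k$-algebraic group. So assume $\Gamma$ is polycyclic. Proceed by induction on ${\rm rank}\,\Gamma$. Let $N$ denote the Fitting subgroup of $\Gamma$, a finitely generated torsion-free nilpotent group. By Mal'cev's theorem, $N$ has a unique Mal'cev completion, which is a unipotent $k$-algebraic group ${\bf U}_N$ with $\dim {\bf U}_N = {\rm rank}\,N$. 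The quotient $\Gamma/N$ is finitely generated abelian; pass to a finite-index subgroup of $\Gamma$ so that $\Gamma/N \cong \Z^{r}$ for $r = {\rm rank}\,\Gamma - {\rm rank}\,N$. For lifts $t_1,\dots,t_r$ of generators, conjugation induces an algebraic automorphism $\alpha_i$ of ${\bf U}_N$ whose Jordan decomposition $\alpha_i = s_i u_i$ separates a semisimple and a unipotent part. One then defines ${\bf H}_\Gamma = {\bf T} \ltimes {\bf U}$, where ${\bf U}$ is the unipotent $k$-algebraic group generated by ${\bf U}_N$ together with elements $\tilde t_i$ acting by conjugation via $u_i$, and ${\bf T}$ is the $k$-torus generated by the $s_i$ acting on ${\bf U}$. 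The embedding $\psi$ sends $t_i$ to $s_i \tilde t_i$. A dimension count gives $\dim {\bf U} = {\rm rank}\,N + r = {\rm rank}\,\Gamma$, verifying (3), and condition (2) is arranged by the construction.

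For uniqueness, let ${\bf H}_1, {\bf H}_2$ be two $k$-algebraic hulls. Each ${\bf H}_i$ is solvable and admits a decomposition ${\bf H}_i = {\bf T}_i \ltimes {\bf U}_i$ with ${\bf U}_i = {\bf U}({\bf H}_i)$ and ${\bf T}_i$ a maximal $k$-torus. The preimages of identity under ${\bf H}_i \to {\bf T}_i$ meet $\psi_i(\Gamma)$ in a subgroup containing the Fitting subgroup $N$; both intersections are Zariski-dense in ${\bf U}_i$ and carry the same group structure (determined intrinsically by $\Gamma$ via the Jordan decomposition of the conjugation action), so Mal'cev rigidity produces a canonical $k$-algebraic isomorphism ${\bf U}_1 \cong {\bf U}_2$. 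Condition (2) forces each ${\bf T}_i$ to act faithfully on ${\bf U}_i$, so the tori are pinned down by the semisimple parts of the action of $\Gamma$, giving a compatible ${\bf T}_1 \cong {\bf T}_2$; combining yields a $k$-algebraic isomorphism ${\bf H}_1 \cong {\bf H}_2$ intertwining $\psi_1$ and $\psi_2$.

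The main obstacle is the uniqueness argument: conditions (2) and (3) must be used together to rigidify both the torus and the unipotent radical, so one has to argue carefully that no auxiliary toral or unipotent factors can be added or removed without violating one of these conditions. Keeping the construction defined over $k$ rather than over $\bar k$ throughout, particularly when invoking Jordan decomposition, is a further technical point to be handled.
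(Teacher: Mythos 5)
The paper offers no proof of this statement: it is quoted verbatim from Baues \cite[Theorem A.1]{B}, so there is no internal argument to compare against. Your existence sketch does follow the standard Mostow--Auslander ``semisimple splitting'' line that underlies Baues's construction (Fitting subgroup, Mal'cev completion, Jordan decomposition of the conjugation action, ${\bf H}_\Gamma={\bf T}\ltimes{\bf U}$), but as written it has gaps. The chief one in the existence part: the automorphisms $\alpha_i$ induced by lifts $t_i$ of generators of $\Gamma/N$ commute only modulo inner automorphisms of ${\bf U}_N$, so their semisimple parts $s_i$ need not commute, and ``the $k$-torus generated by the $s_i$'' is not automatically a torus (a group generated by non-commuting semisimple elements can fail to be diagonalizable and can even acquire unipotent pieces). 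Arranging a genuine splitting with an abelian diagonalizable ${\bf T}$ acting on a unipotent ${\bf U}$ of dimension exactly ${\rm rank}\,\Gamma$, and verifying condition (2) for it, is the technical heart of the construction and is asserted rather than proved. The reduction from the virtually polycyclic to the polycyclic case (extending by the finite group $\Gamma/\Delta$ while preserving (2) and the injectivity of $\psi$) is likewise nontrivial and only named.

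The uniqueness argument contains a step that is actually false. You claim that $\psi_i(\Gamma)\cap{\bf U}_i$ contains the Fitting subgroup and is Zariski-dense in ${\bf U}_i$. In general it is not: take $\Gamma=\Z\ltimes_A\Z^2$ with $A$ hyperbolic (the fundamental group of a Sol-manifold). Then ${\rm rank}\,\Gamma=3$, so $\dim{\bf U}({\bf H}_\Gamma)=3$ by condition (3), but $\psi(\Gamma)\cap{\bf U}({\bf H}_\Gamma)$ is the Fitting subgroup $\Z^2$, whose Zariski closure is a $2$-dimensional subgroup of ${\bf U}$. The whole point of the hull is that the generator of the $\Z$-factor maps to a product $s\tilde t$ with a nontrivial component in ${\bf T}$ \emph{and} in ${\bf U}$, so $\Gamma$ is ``spread'' across the two factors and Mal'cev rigidity cannot be applied to $\psi_i(\Gamma)\cap{\bf U}_i$ alone. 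A correct uniqueness proof must exploit the Zariski-density of all of $\psi_i(\Gamma)$ in ${\bf H}_i$ together with conditions (2) and (3) --- for instance by taking the Zariski closure of the diagonal copy of $\Gamma$ in ${\bf H}_1\times{\bf H}_2$ and showing both projections are isomorphisms --- which is essentially what Baues does.
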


 Let $\Gamma$ be a torsion-free virtually polycyclic group and
$\bf{H_{\Gamma}}$  the $\Ra$-algebraic hull of $\Gamma$.
Denote $H_{\Gamma}=\bf{H_{\Gamma}}(\R)$. 
Let $U_{\Gamma}$ be the unipotent radical of $H_{\Gamma}$ and $T$  a maximal reductive subgroup.
Then $H_{\Gamma}$ decomposes as a semi-direct product $H_{\Gamma}=T\ltimes U_{\Gamma}$.
Let $\frak{u}$ be the Lie algebra of $U_{\Gamma}$. Since the exponential map ${\exp}:{\frak u} \longrightarrow U_{\Gamma}$ is a diffeomorphism, $U_{\Gamma}$ is diffeomorphic to $\R^n$ such that $n={\rm rank}\,\Gamma$.
For the semi-direct product $H_{\Gamma}=T\ltimes U_{\Gamma}$, we denote $\phi:T\to {\rm Aut}(U_{\Gamma})$ the action of $T$ on $U_{\Gamma}$.
Then we have the homomorphism $\alpha :H_{\Gamma}\longrightarrow {\rm Aut}(U_{\Gamma})\ltimes U_{\Gamma}$ such that $\alpha(t,u)=(\phi(t), u)$ for $(t,u)\in T\ltimes U_{\Gamma}$.
By the property (2) in Definition \ref{a-d-2}, $\phi$ is injective and hence $\alpha$ is injective.

In \cite{B} Baues constructed a compact  aspherical manifold $M_{\Gamma}=\alpha(\Gamma)\backslash U_{\Gamma}$ with $\pi_{1}(M_{\Gamma})=\Gamma$.
We call $M_{\Gamma}$ a standard $\Gamma$-manifold.

\begin{theorem}{\rm (\cite[Theorem 1.2, 1.4]{B})}\label{a-t-3}
A standard $\Gamma$-manifold is unique up to diffeomorphism.
A solvmanifold with the fundamental group $\Gamma$ is diffeomorphic to the standard $\Gamma$-manifold $M_{\Gamma}$.
\end{theorem}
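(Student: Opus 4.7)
The statement has two parts --- uniqueness of the standard $\Gamma$-manifold, and the identification of a solvmanifold with $\pi_{1}=\Gamma$ as $M_{\Gamma}$ --- which I would attack separately.

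For uniqueness, observe that the only choice in constructing $M_{\Gamma}$ is that of a maximal reductive subgroup $T$ in the Levi-type decomposition $H_{\Gamma}=T\ltimes U_{\Gamma}$. I would invoke the Mostow conjugacy theorem for algebraic groups: any two such $T,T^{\prime}$ are conjugate by an element $u_{0}\in U_{\Gamma}$. Conjugation by $u_{0}$ carries $(T,\phi)$ to $(T^{\prime},\phi^{\prime})$, and left translation by $u_{0}$ on $U_{\Gamma}$ intertwines the two resulting actions $\alpha,\alpha^{\prime}$; hence it descends to a diffeomorphism $\alpha(\Gamma)\backslash U_{\Gamma}\to \alpha^{\prime}(\Gamma)\backslash U_{\Gamma}$.

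For the solvmanifold part, let $G$ be a simply connected solvable Lie group with lattice $\Gamma$. By the preceding proposition $\Gamma$ is torsion-free polycyclic, and one has ${\rm rank}\,\Gamma=\dim G=\dim U_{\Gamma}$. My plan is first to promote the embedding $\psi:\Gamma\hookrightarrow H_{\Gamma}$ to a smooth homomorphism $\Psi:G\to H_{\Gamma}$ extending $\psi$ on a finite-index subgroup of $\Gamma$. The tool is the Mostow structure theorem for simply connected solvable groups, together with Malcev rationalization of the nilradical: it lets one select a reductive complement to the nilradical of $G$ whose adjoint action matches, after algebraic closure, the action of $T\subset H_{\Gamma}$ on $U_{\Gamma}$. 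Uniqueness of the algebraic hull (Theorem \ref{a-t-1}) then identifies the relevant Zariski closures and produces $\Psi$. Given $\Psi$, I define $F:G\to U_{\Gamma}$ as the composition of $\alpha\circ \Psi$ with projection to the second factor of $\Aut(U_{\Gamma})\ltimes U_{\Gamma}$. By construction $F$ is $\Gamma$-equivariant. Since both $G$ and $U_{\Gamma}$ are diffeomorphic to $\R^{n}$ via their exponential maps, and the differential of $F$ at the identity is an isomorphism of tangent spaces (being induced by the injective Lie algebra map underlying $\Psi$, where injectivity follows from the dimension count), $F$ is a global diffeomorphism and descends to the required diffeomorphism $G/\Gamma\to M_{\Gamma}$.

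The hard part is constructing $\Psi$: a general simply connected solvable $G$ need not embed in its own real algebraic hull, since the adjoint representation may exhibit non-real eigenvalues, so one must pass to a suitable ``thickening'' of $G$ whose smooth manifold structure still equals that of $G$ while carrying the algebraic action used to build $M_{\Gamma}$. Controlling this thickening --- ensuring compatibility with $\alpha(\Gamma)$ and verifying that the induced map on the quotient is a diffeomorphism, rather than merely a covering or an immersion --- is the delicate technical step and forms the core of Baues's construction.
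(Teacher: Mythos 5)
You should first be aware that the paper contains no proof of this statement: it is quoted wholesale from Baues (\cite[Theorems 1.2, 1.4]{B}), so there is no in-paper argument to compare against. Judged against Baues's actual proof, your outline does follow the right strategy --- conjugacy of maximal reductive subgroups for uniqueness, and for the solvmanifold part an embedding of $G$ into the hull followed by projection onto $U_{\Gamma}$ --- but two of your steps have genuine gaps. First, the concluding claim that $F$ is a global diffeomorphism ``since both $G$ and $U_{\Gamma}$ are diffeomorphic to $\R^{n}$ and the differential of $F$ at the identity is an isomorphism'' is not valid: a smooth map of $\R^{n}$ to itself with invertible differential at a point, or even everywhere, need not be injective or surjective. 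The correct route is to view $F$ as the orbit map of the $G$-action on $U_{\Gamma}\cong H_{\Gamma}/T$ induced by $\alpha\circ\Psi$: one shows the orbit of the identity coset is open (hence, by connectedness and homogeneity, all of $U_{\Gamma}$) with discrete stabilizer, so $F$ is a covering map, and then invokes simple connectedness of $U_{\Gamma}$. Equivalently one must prove $\Psi(G)\cdot T=H_{\Gamma}$ and $\Psi(G)\cap T=\{e\}$. Relatedly, injectivity of the Lie algebra map does not ``follow from the dimension count'': $\dim H_{\Gamma}=\dim T+\dim U_{\Gamma}$ exceeds $\dim G$ in general, and what is actually needed is $d\Psi(\frak{g})\cap\frak{t}=0$, which is where condition (2) of Definition \ref{a-d-2} does real work.

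Second, your diagnosis of the ``hard part'' is misplaced. A simply connected solvable Lie group $G$ always embeds into the real points of its own algebraic hull; the hull exists precisely because the centralizer of the nilradical $N$ of such a $G$ is contained in $N$. Non-real eigenvalues of the adjoint representation obstruct the Mostow--Hattori isomorphism $H^{\ast}(G/\Gamma,\R)\cong H^{\ast}(\frak{g})$ discussed in the introduction, not the existence of the embedding. The genuinely delicate point in Baues's argument is instead the comparison of the two hulls: one must show that the $\Ra$-algebraic hull of the lattice $\Gamma$ and the real algebraic hull of $G$ have the same unipotent radical (this rests on Zariski density theorems for lattices in solvable groups), which is what allows $\psi$ to extend to the homomorphism $\Psi$ you posit. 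Your uniqueness paragraph is essentially correct, though you should also note that the hull itself is only unique up to isomorphism and that Theorem \ref{a-t-1} must supply an isomorphism compatible with the two embeddings of $\Gamma$ before the conjugacy of $T$ and $T^{\prime}$ finishes the job.
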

Let $A^{\ast}(M_{\Gamma})$ be the de Rham complex of $M_{\Gamma}$.
Then $A^{\ast}(M_{\Gamma}) $ is  the set of   the $\Gamma$-invariant differential forms ${A^{\ast}(U_{\Gamma})}^{\Gamma}$ on $U_{\Gamma}$. 
Let $(\bigwedge \frak{u} ^{\ast})^{T}$ be the left-invariant forms on $U_{\Gamma}$ which are fixed by $T$.
Since $\Gamma\subset H_{\Gamma}=U_{\Gamma}\cdot T$, we have the inclusion
\[(\bigwedge {\frak u} ^{\ast})^{T} ={A^{\ast}(U_{\Gamma})}^{H_{\Gamma}} \subset {A^{\ast}(U_{\Gamma})}^{\Gamma}= A^{\ast}(M_{\Gamma}).\]
\begin{theorem}{\rm (\cite[Theorem 1.8]{B})}\label{a-t-4}
This inclusion induces an  isomorphism on cohomology.
\end{theorem}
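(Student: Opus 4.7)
My plan is to reduce to the classical Nomizu theorem for nilpotent Lie groups by an induction on ${\rm rank}\,\Gamma$, using the semidirect structure $H_\Gamma = T \ltimes U_\Gamma$ to account for the twist by the reductive factor.

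First I would reduce to the case of torsion-free polycyclic $\Gamma$ by passing to a finite-index polycyclic subgroup $\Gamma_{0} \subset \Gamma$. The finite quotient $\Gamma/\Gamma_{0}$ acts on both $(\bigwedge {\frak u}^{\ast})^{T}$ and on $A^{\ast}(M_{\Gamma_{0}})$ through its image in $H_{\Gamma}$, and the inclusion for $\Gamma$ is recovered as the $\Gamma/\Gamma_{0}$-invariant part of the inclusion for $\Gamma_{0}$. Since we work over $\R$ and the group is finite, invariants is an exact functor that commutes with cohomology, so it is enough to treat the polycyclic case.

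For polycyclic $\Gamma$, I would induct on rank using the chain in Definition \ref{a-d-1}. Extracting a cyclic quotient gives a short exact sequence $1 \to N \to \Gamma \to \Z \to 1$ with $N$ of strictly smaller rank. This presents $M_{\Gamma}$, possibly after a finite cover, as the total space of a fiber bundle $M_{N} \to M_{\Gamma} \to S^{1}$. The inclusion of algebraic hulls ${\bf H}_{N} \hookrightarrow {\bf H}_{\Gamma}$ produces a $T$-stable ideal ${\frak u}_{N} \subset {\frak u}$ with one-dimensional quotient matching the base $S^{1}$, and hence a Hochschild--Serre filtration on $(\bigwedge {\frak u}^{\ast})^{T}$ parallel to the Serre filtration on $A^{\ast}(M_{\Gamma})$. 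The inclusion of complexes respects these filtrations, so it induces a morphism of spectral sequences abutting to the cohomologies in question. On the $E_{2}$ pages, both sides compute the cohomology of $S^{1}$ with local coefficients in the cohomology of the fiber $M_{N}$, and the inductive hypothesis identifies these coefficient systems and makes the morphism an isomorphism, which then propagates to $E_{\infty}$.

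The main obstacle is handling the reductive factor $T$ when it has non-compact components. Then $T$ acts on ${\frak u}$ with non-trivial weights, so $\bigwedge {\frak u}^{\ast}$ decomposes into weight pieces that sit inside $A^{\ast}(M_{\Gamma})$ but not inside $(\bigwedge {\frak u}^{\ast})^{T}$. Showing that these extra pieces contribute nothing cohomologically on $M_{\Gamma}$ is the crux, and is precisely where the rigidity condition $Z_{{\bf H}_{\Gamma}}(\U({\bf H}_{\Gamma})) \subset \U({\bf H}_{\Gamma})$ of Definition \ref{a-d-2} enters: it forces the projection of $\Gamma$ to $T$ to move every non-trivial weight subspace non-trivially, so that after averaging against $\Gamma$-characters the non-invariant weight components become exact. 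This is what pushes the result beyond Hattori's theorem, which sidestepped the difficulty entirely by assuming that every ${\rm Ad}_{g}$ has real spectrum.
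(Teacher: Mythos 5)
The paper does not prove this statement at all: it is quoted verbatim from Baues \cite[Theorem 1.8]{B}, so there is no in-paper argument to compare against. Baues's actual proof does not run through a fibration induction; it identifies $H^{\ast}(M_{\Gamma},\R)$ with the group cohomology $H^{\ast}(\Gamma,\R)$ (asphericity), proves a Mostow-type isomorphism $H^{\ast}(\Gamma,V)\cong H^{\ast}({\bf H}_{\Gamma},V)$ between group cohomology and the rational (algebraic-group) cohomology of the hull for algebraic coefficient modules $V$, and then uses the Hochschild--Mostow identification of the rational cohomology of $T\ltimes U_{\Gamma}$ with $H^{\ast}(\bigwedge{\frak u}^{\ast})^{T}$. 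Your outline is instead the Mostow--Hattori route, which is a legitimate alternative strategy in principle, but as written it has a genuine gap at exactly the point you call the crux.

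The gap is the $E_{2}$-page comparison in the inductive step. You assert that both spectral sequences have $E_{2}=H^{\ast}(S^{1};\mathcal{H}^{\ast}(\text{fiber}))$ and that the inductive hypothesis identifies the coefficient systems. But the Hochschild--Serre $E_{2}$ of the complex $(\bigwedge{\frak u}^{\ast})^{T}$ with respect to ${\frak u}_{N}$ is (after justifying that taking $T$-invariants commutes with the filtration and with passing to cohomology) of the form $\bigl(H^{p}({\frak u}/{\frak u}_{N};H^{q}({\frak u}_{N}))\bigr)^{T}$, while the Serre $E_{2}$ is $H^{p}(\Z;H^{q}(M_{N}))$, where $\Z$ acts by the monodromy ${\rm Ad}_{\tilde\gamma}$ of a lift $\tilde\gamma\in\Gamma\subset T\ltimes U_{\Gamma}$. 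Matching these is precisely the content of the theorem: one must show that $\ker(\rho(\tilde\gamma)-1)$ and ${\rm coker}(\rho(\tilde\gamma)-1)$ on $H^{q}(M_{N})$ agree with the corresponding $T$-invariant Lie-algebra expressions, and the obstruction is that $\tilde\gamma$ acts with a semisimple part (which lands in $T$ and is handled by Zariski density) \emph{and} a unipotent part coming from its $U_{\Gamma}$-component; a vector fixed by $\rho(\tilde\gamma)$ need not be fixed by its semisimple part alone. Your proposed mechanism --- ``after averaging against $\Gamma$-characters the non-invariant weight components become exact'' --- is not an argument and does not address this: averaging over characters is not available (the relevant weight spaces are not $\Gamma$-subrepresentations of the de Rham complex in any useful sense), and in any case the danger is not only that extra weight pieces might contribute, but that $H^{\ast}(M_{\Gamma})$ might contain classes not represented by invariant forms at all (surjectivity), which is where the classical Nomizu-type isomorphism fails for the Lie algebra ${\frak g}$ of an actual solvable group. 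Condition (2) of Definition \ref{a-d-2} plus Zariski density of $\psi(\Gamma)$ is indeed the right input, but turning it into the required vanishing/comparison statement for $H^{\ast}(\Z,V)$ is the real work, and it is missing. The reduction to the polycyclic case and the mapping-torus structure of $M_{\Gamma}$ are plausible but also need care (normality of $\Gamma_{0}$, the relation between ${\bf H}_{\Gamma_{0}}$ and ${\bf H}_{\Gamma}$, and the compatible embedding ${\bf H}_{N}\hookrightarrow{\bf H}_{\Gamma}$ with $\dim{\frak u}/{\frak u}_{N}=1$); these are secondary to the main gap above.
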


By the application of the above facts, we prove the main theorem of this paper.

\begin{theorem}\label{symex}
Suppose $M_{\Gamma} $ is c-symplectic. 
Then $M_{\Gamma}$ 
admits a symplectic structure.
In particular cohomologically symplectic solvmanifolds are symplectic.
\end{theorem}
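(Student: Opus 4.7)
The plan is to use Theorem \ref{a-t-4} to lift the c-symplectic class to a left-invariant, $T$-invariant closed $2$-form on $U_{\Gamma}$, and then to observe that any such lift is automatically non-degenerate. Write $2n = {\rm rank}\,\Gamma$, so that $\dim M_{\Gamma} = \dim U_{\Gamma} = 2n$.

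First, given $\omega \in H^{2}(M_{\Gamma},\R)$ with $\omega^{n} \neq 0$, I would invoke surjectivity in degree $2$ of the quasi-isomorphism
$\iota \colon (\bigwedge \mathfrak{u}^{\ast})^{T} \hookrightarrow A^{\ast}(M_{\Gamma})$
provided by Theorem \ref{a-t-4} to produce a closed $\tilde\omega \in (\bigwedge^{2} \mathfrak{u}^{\ast})^{T}$ whose cohomology class in $H^{2}(M_{\Gamma},\R)$ equals $\omega$.

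Next, since $\iota$ is a map of differential graded algebras, one has $[\tilde\omega^{n}] = \omega^{n} \neq 0$ in $H^{2n}(M_{\Gamma},\R)$, so in particular $\tilde\omega^{n} \in (\bigwedge^{2n}\mathfrak{u}^{\ast})^{T}$ is already nonzero as a form. Because $\dim \mathfrak{u} = 2n$, the space $\bigwedge^{2n}\mathfrak{u}^{\ast}$ is one-dimensional and is therefore spanned by $\tilde\omega^{n}$; pushed forward by left translations, $\tilde\omega^{n}$ is a nowhere-vanishing top form on $U_{\Gamma}$. Equivalently, $\tilde\omega$ is pointwise non-degenerate, i.e.\ a left-invariant symplectic form on $U_{\Gamma}$. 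By the identification $(\bigwedge \mathfrak{u}^{\ast})^{T} = A^{\ast}(U_{\Gamma})^{H_{\Gamma}}$ and the inclusion $\alpha(\Gamma) \subset H_{\Gamma}$, the form $\tilde\omega$ is automatically $\alpha(\Gamma)$-invariant and descends to a symplectic form on $M_{\Gamma} = \alpha(\Gamma)\backslash U_{\Gamma}$. The corollary for solvmanifolds is then immediate from Theorem \ref{a-t-3}.

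The decisive input is Theorem \ref{a-t-4}, cited from Baues, which plays the role of the Nomizu/Hattori theorem in the classical setting; once this DGA-level quasi-isomorphism is available the rest is formal and mirrors the nilmanifold proof recalled in the introduction. I do not anticipate a genuine obstacle beyond two routine observations: that $\iota$ respects the wedge product (immediate, since both sides are subalgebras of $A^{\ast}(U_{\Gamma})$), and that a nonzero left-invariant top form on a Lie group is nowhere vanishing, which is what converts cohomological non-degeneracy into pointwise non-degeneracy of $\tilde\omega$.
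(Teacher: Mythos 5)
Your proposal is correct and follows essentially the same route as the paper: both use Theorem \ref{a-t-4} to represent the c-symplectic class by a closed form in $(\bigwedge^{2}\mathfrak{u}^{\ast})^{T}$, observe that its $n$-th power is a nonzero left-invariant top form on $U_{\Gamma}$ and hence nowhere vanishing, and conclude that the form descends to a symplectic form on $M_{\Gamma}=\alpha(\Gamma)\backslash U_{\Gamma}$. The only difference is cosmetic (you spell out the one-dimensionality of $\bigwedge^{2n}\mathfrak{u}^{\ast}$ and the DGA compatibility, which the paper leaves implicit).
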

\begin{proof}
Since we have the isomorphism $H^{\ast}(M_{\Gamma},\R)\cong H^{\ast}((\bigwedge {\frak u}^{\ast})^{T})$, we have $\omega\in (\bigwedge^{2} {\frak u}^{\ast})^{T}$ such that $0\not=[\omega]^{n}\in H^{2n}((\bigwedge 
{\frak u}^{\ast})^{T})$.
This gives $0\not= \omega^{n}\in  (\bigwedge {\frak u}^{\ast})^{T}$ and 
hence $0\not= \omega^{n}\in  \bigwedge {\frak u}^{\ast}$.
Since  $\omega^{n}$ is a non-zero invariant $2n$-form on $U_{\Gamma}$, we have
$(\omega^{n})_{p}\not= 0$ for any $p\in U_{\Gamma}$.
Hence by the inclusion $(\bigwedge {\frak u}^{\ast})^{T}\subset A^{\ast}(U_{\Gamma})^{T}=A^{\ast}(M_{\Gamma})$, 
we have $(\omega^{n})_{\Gamma p}\not= 0$ for any $\Gamma p\in \Gamma \backslash U_{\Gamma}=M_{\Gamma} $.
This implies that $\omega$ is a symplectic form on $M_{\Gamma}$.
Hence we have the theorem. 
\end{proof}
\section{remarks}
Let $G=\R\ltimes_{\phi}U_{3}(\C)$ such that
\[\phi(t)\cdot \left(\begin{array}{ccc}
1&x&z\\
0&1&y\\
0&0&1\\
\end{array}
\right)=
\left(\begin{array}{ccc}
1&e^{i\pi t}\cdot x&z\\
0&1&e^{-i\pi t}\cdot y\\
0&0&1\\
\end{array}
\right),\]
and $D=\Z\ltimes_{\phi} D^{\prime}$ with 
\[D^{\prime}=\left\{ \left(
\begin{array}{ccc}
1&x_{1}+ix_{2}&z_{1}+iz_{2}\\
0&1&y_{1}+iy_{2}\\
0&0&1\\
\end{array}
\right)  : x_{1}, y_{2}, z_{2} \in \Z,\, x_{2},y_{1},z_{1}\in \R \right\}.\]
Then $D$ is not discrete and $G/D$ is compact.
We have $D/D_{0}\cong \Z\ltimes_{\varphi }U_{3}(\Z)$ such that
\[\varphi(t)\cdot\left(\begin{array}{ccc}
1&x&z\\
0&1&y\\
0&0&1\\
\end{array}
\right)=\left(\begin{array}{ccc}
1&(-1)^{t}x&z\\
0&1&(-1)^{-t}y\\
0&0&1\\
\end{array}
\right),\]
where $D_{0}$ is the identity component of $D$.
Denote $\Gamma=D/D_{0}$.
We have the algebraic hull $H_{\Gamma}=\{\pm 1\}\ltimes_{\psi }(U_{3}(\R)\times \R)$ such that
\[\psi(-1)\cdot \left(\left(\begin{array}{ccc}
1&x&z\\
0&1&y\\
0&0&1\\
\end{array}
\right), t\right)=\left(\left(\begin{array}{ccc}
1&-x&z\\
0&1&-y\\
0&0&1\\
\end{array}
\right), t\right).\]
The dual of the Lie algebra ${\frak u}$ of $U_{3}(\R)\times \R$
is given by ${\frak u}^{\ast}=\langle \alpha ,\beta ,\gamma ,\delta \rangle$ such that the differential is given by
\[ d \alpha =d \beta =d\delta =0,\]
\[ d \gamma =-\alpha \wedge \beta,\]
and the action of $\{ \pm 1\}$ is given by 
\[(-1)\cdot \alpha =-\alpha ,\, (-1)\cdot \beta =-\beta , \]
\[(-1)\cdot\gamma =\gamma ,\, (-1)\cdot\delta =\delta .\] 
Then we have a diffeomorphism $M_{\Gamma}\cong G/D$ and an isomorphism $H^{\ast}(M_{\Gamma}, \R)$ $\cong H^{\ast}((\bigwedge {\frak u}^{\ast})^{\{\pm 1\}})$.
By simple computations,
 $H^{2}( (\bigwedge {\frak u}^{\ast})^{\{\pm 1\}})=0$
 and hence the solvmanifold $G/D$ is not symplectic.
\begin{remark}
The proof of the Theorem \ref{symex} contains a proof of the following proposition.
\begin{proposition}\label{nos}
If $M_{\Gamma}$ admits a symplectic structure, then $U_{\Gamma}$ has an 
invariant symplectic form.
\end{proposition}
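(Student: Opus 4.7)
The plan is to reduce Proposition \ref{nos} to Theorem \ref{symex} by the elementary observation that a symplectic form is automatically c-symplectic. If $\sigma \in A^{2}(M_{\Gamma})$ is a symplectic form on the compact oriented $2n$-manifold $M_{\Gamma}$, then $\sigma^{n}$ is a nowhere-vanishing top-degree form, so $\int_{M_{\Gamma}}\sigma^{n} \neq 0$ and consequently $[\sigma]^{n} \neq 0$ in $H^{2n}(M_{\Gamma},\R)$. Thus $M_{\Gamma}$ carries a c-symplectic class.

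With this in hand, the argument already carried out inside the proof of Theorem \ref{symex} applies verbatim. Using the isomorphism $H^{*}(M_{\Gamma},\R) \cong H^{*}((\bigwedge \frak{u}^{*})^{T})$ supplied by Theorem \ref{a-t-4}, I would pick a closed representative $\omega \in (\bigwedge^{2} \frak{u}^{*})^{T}$ of a c-symplectic class (for instance, of $[\sigma]$ itself under the inverse isomorphism). Then $[\omega]^{n} \neq 0$ forces $\omega^{n}$ to be a nonzero element of the one-dimensional space $\bigwedge^{2n}\frak{u}^{*}$; viewed as a left-invariant top form on $U_{\Gamma}$, it is therefore nowhere vanishing. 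Hence $\omega$ is nondegenerate at every point of $U_{\Gamma}$, and since it is closed by construction, it is a (both left- and $T$-)invariant symplectic form on $U_{\Gamma}$.

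No step here is really an obstacle: the technical content lives entirely in Theorem \ref{symex}, and the only added ingredient is the trivial fact that a symplectic form represents a non-zero top cohomology class. The point of stating Proposition \ref{nos} separately is to record the by-product of the earlier proof, namely that whenever $M_{\Gamma}$ admits a symplectic structure at all, one can find such a structure on the nilpotent covering group $U_{\Gamma}$ that is invariant under the natural actions in sight.
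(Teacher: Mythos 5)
Your proposal is correct and matches the paper's intent exactly: the paper simply remarks that the proof of Theorem \ref{symex} already contains a proof of Proposition \ref{nos}, and your only addition---that a symplectic form $\sigma$ gives $[\sigma]^{n}\neq 0$ by integration, so $M_{\Gamma}$ is c-symplectic---is precisely the trivial observation needed to make that reduction explicit. Nothing further is required.
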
 
Otherwise for the above example, $U_{\Gamma}=U_{3}(\R)\times \R$ has an invariant symplectic form but $M_{\Gamma}$ is not symplectic.
Thus the converse of this proposition is not true.
If $\Gamma$ is nilpotent, then $T$ is trivial and any invariant symplectic form on $U_{\Gamma}$ induces the symplectic form on $M_{\Gamma}$.
Hence for nilmanifolds the converse of Proposition \ref{nos} is true.
\end{remark}
\begin{remark}
$\Gamma$ is a finite extension of a lattice of $U_{\Gamma}=U_{3}(\R)\times \R$.
Hence $M_{\Gamma}$ is finitely covered by a Kodaira-Thurston manifold (see \cite{Th}, \cite[p.192]{Fe}).
$M_{\Gamma}$ is an example of a non-symplectic manifold finitely covered by a symplectic manifold.
\end{remark}

Let $H=G\times \R$. 
Then the dual of the Lie algebra ${\frak h}$ of $H$ is given by
${\frak h}^{\ast}=\langle \sigma ,\tau, \zeta _{1}, \zeta _{2}, \eta _{1},\eta _{2},\theta _{1},\theta _{2}\rangle$ such that the differential is given by 
\[d\sigma = d \tau =0,\]
\[d \zeta _{1}= \tau \wedge \zeta _{2},\, d\zeta _{2} =-\tau \wedge \zeta _{1},\]
\[d\eta _{1}=\tau \wedge \eta _{2},\, d \eta _{2}=-\tau \wedge \eta _{1},\]
\[d\theta _{1}= -\zeta _{1}\wedge \eta_{1} +\zeta _{2}\wedge \eta _{2},\,
d\theta _{2}= -  \zeta _{1}\wedge \eta_{2} -\zeta _{2}\wedge \eta _{1}.\]
By simple computations, any closed invariant $2$-form $\omega\in \bigwedge^{2}{\frak h}^{\ast}$ satisfies $\omega^{4}=0$.
Hence $H$ has no invariant symplectic form.
Otherwise we have a lattice $\Delta=2\Z\ltimes U_{3}(\Z+i\Z)\times \Z$ which is also a lattice of $\R^{2}\times U_{3}(\C)$.
Thus $H/\Delta$ is diffeomorphic to a direct product of a $2$-dimensional torus and an Iwasawa manifold (see \cite{FG}).
Since an Iwasawa manifold is symplectic (see \cite{FG}), $H/\Delta$ is also symplectic.
By this example we can say:
\begin{remark}
For a simply connected nilpotent Lie group $G$ with lattice $\Gamma$,
if the nilmanifold $G/\Gamma$ is symplectic then $G$ has an invariant symplectic form.
But suppose $G$ is solvable we have an example of a symplectic solvmanifold $G/\Gamma$ such that $G$ has no invariant symplectic form. 
\end{remark}
{\bf  Acknowledgements.} 

The author would like to express his gratitude to   Toshitake Kohno for helpful suggestions and stimulating discussions.
This research is supported by JSPS Research Fellowships for Young Scientists.


\begin{thebibliography}{20}


\bibitem{B}
O. Baues, Infra-solvmanifolds and rigidity of subgroups in solvable linear algebraic groups. Topology {\bf 43} (2004), no. 4, 903--924.
\bibitem{Bor}
A. Borel, Linear algebraic groups 2nd enl. ed Springer-verlag (1991).
\bibitem{Fe}
Y. F$\acute {\rm e}$lix, J. Oprea and D. Tanr$\acute {\rm e}$, Algebraic Models in Geometry, Oxford Graduate Texts in
Mathematics 17, Oxford University Press 2008.
\bibitem{FG} M. Fernandez, A. Gray, The Iwasawa manifold. Differential 
geometry, Peniscola 1985, 157--159, Lecture Notes in Math., 1209, 
Springer, Berlin, 1986.

\bibitem{Hatt} A. Hattori, Spectral sequence in the de Rham cohomology of fibre bundles. J. Fac. Sci. Univ. Tokyo Sect. I {\bf 8} 1960 289--331 (1960). 
\bibitem{Mal}
A. Malcev, On a class of homogeneous spaces. (Russian) Izvestiya Akad. Nauk. SSSR. Ser. Mat. {\bf 13}, (1949). 9--32.
\bibitem{Nom}
K. Nomizu, On the cohomology of compact homogeneous spaces of nilpotent Lie groups. 
Ann. of Math. (2) {\bf 59}, (1954). 531--538.
 \bibitem{R}
 M. S. Raghunathan, Discrete subgroups of Lie Groups, Springer-Verlag, New York, 1972. 
 
 \bibitem{Th} W. P. Thurston, Some simple examples of symplectic 
manifolds. 
Proc. Amer. Math. Soc. {\bf 55} (1976), no. 2, 467--468. 
\end{thebibliography}
\end{document}